\documentclass[a4paper,reqno,index]{amsart}

\usepackage[normalem]{ulem} 

\DeclareMathAlphabet{\lcal}{U}{dutchcal}{m}{n}
\usepackage{amsmath, amsfonts, amsthm, amssymb}
\usepackage[maxbibnames=9]{biblatex}
\usepackage{mathrsfs}
\usepackage{tikz}
\usetikzlibrary{arrows.meta, positioning, calc}
\usepackage[title]{appendix}
\usepackage{amsthm}
\usepackage{xcolor}
\usepackage{graphicx}
\usepackage{hyperref}
\usepackage{mathtools}

\usepackage{booktabs}
\usepackage{array}
\usepackage{longtable}

\usepackage[shortlabels]{enumitem}

\usepackage{setspace}

\newcommand{\df}[1]{{\textit{#1}}{\index{#1}}}
 
 \numberwithin{equation}{section}

\allowdisplaybreaks

\makeindex

\hypersetup{
    bookmarks=true,         
    unicode=false,          
    pdftoolbar=true,        
    pdfmenubar=true,        
    pdffitwindow=false,     
    pdfstartview={FitH},    
    pdftitle={My title},    
    pdfauthor={Author},     
    pdfsubject={Subject},   
    pdfcreator={Creator},   
    pdfproducer={Producer}, 
    pdfkeywords={keyword1, key2, key3}, 
    pdfnewwindow=true,      
    colorlinks=false,       
    linkcolor=green,          
    citecolor=green,        
    filecolor=green,      
    urlcolor=green,           
    urlbordercolor={1 1 1}  
}

\newtheorem{theorem}{Theorem}[section]
\newtheorem*{theorem*}{Theorem}
\newtheorem*{conjecture*}{Conjecture}
\newtheorem{lemma}[theorem]{Lemma}

 \newcommand{\Z}{\mathbb{Z}}

\newcommand{\HM }{\mathcal{M}}
\newcommand{\HN }{\mathcal{N}}

\newcommand{\Mult}{\operatorname{Mult}}

\newcommand{\cB}{\mathcal{B}}
\newcommand{\cM}{\mathcal{M}}

\newcommand{\HE}{\mathcal{E}}
\newcommand{\HF}{\mathcal{F}}

\newcommand{\cH}{\mathcal{H}}
\newcommand{\B}{\mathbb{B}}

\newcommand{\HSF}{\mathcal{F}}
\newcommand{\HSE}{\mathcal{E}}

\newcommand{\ol}[1]{\overline{#1}}

\newtheorem{proposition}[theorem]{Proposition}
\newtheorem*{claim}{Claim}

\newtheorem{corollary}[theorem]{Corollary}
\theoremstyle{definition}
\newtheorem{definition}[theorem]{Definition}

\newtheorem{remark}[theorem]{Remark}

  \usepackage{etoolbox}

\begin{document}

\newcommand{\C}{\mathbb{C}}

\newcommand{\N}{\mathbb{N}}
\newcommand{\T}{\mathbb{T}}

\theoremstyle{definition}

\theoremstyle{remark}

\title[\small WHEN DO KERNELS ADMIT  CHARACTERISTIC FUNCTIONS?]{\large   WHEN DO KERNELS ADMIT  CHARACTERISTIC FUNCTIONS?}

\author[\small Gu]{Caixing Gu}
\address{Department of Mathematics, California Polytechnic State University, San Luis Obispo, CA 93407, USA}
\email{cgu@calpoly.edu}

\author[\small Luo]{Shuaibing Luo}
\address{School of Mathematics, Hunan University, Changsha, Hunan 410082, China}
\email{sluo@hnu.edu.cn} 

\author[\small Tsikalas]{Georgios Tsikalas}
\address{Department of Mathematics, UC Santa Barbara, Santa Barbara, CA}
\email{gtsikalas@ucsb.edu}

\author[\small Zhou]{Min Zhou}
\address{School of Mathematics, Hunan University, Changsha, 410082, PR China}
\email{minzhou1215@hnu.edu.cn}

\subjclass[2020]{47A45, 47A13, 47B38, 46E22} 
\keywords{pairs of reproducing kernels, complete Pick property, shift-invariant subspaces, Beurling-Lax-Halmos, characteristic function.}
 
\begin{abstract}
A general framework for deriving characteristic functions for reproducing kernels that do not necessarily possess the complete Pick property was recently established by Bhattacharyya and Jindal. 
We show that, in this setting, the existence of a characteristic function is equivalent to a Beurling-type invariant subspace condition. Combined with recent results characterizing kernels satisfying this condition, our theorem implies that the existence of a characteristic function is equivalent to a concrete Agler-type decomposition of the underlying kernels.
  \end{abstract}
\maketitle

 \par

\normalsize

\section{Introduction} \label{introsec}

\onehalfspacing

Let $\B_d = \{z = (z_1, \ldots, z_d) \in \C^d: |z| < 1\}$ be the unit ball in $\C^d$, with $\Z_+$   the set of all non-negative integers. Let $K_w(z)$ be a \textit{diagonal holomorphic} kernel on $\B_d$, meaning
\begin{align*}
K_w(z) = K(z,w) = \sum_{\alpha\in \Z^d_+} k_{\alpha} z^\alpha \overline{w}^\alpha, \qquad z, w \in \B_d,
\end{align*}
where $k_{0} = 1, k_{\alpha}  \geq 0, \alpha = (\alpha_1, \ldots, \alpha_d)\in \Z^d_+$  and $ z^\alpha = z_1^{\alpha_1}\cdots z_d^{\alpha_d}$. 
Since $k_0 = 1$, there exist real numbers $\{b_{\alpha}\}$ and $\varepsilon > 0$ such that
\begin{align}\label{kernelcoeffbn}
1 - \frac{1}{K_w(z)} = \sum_{\alpha\in \Z^d_+\backslash \{0\}} b_\alpha  z^\alpha \overline{w}^\alpha,\qquad  |z|, |w|< \varepsilon.
\end{align}
When $K_w(z) \neq 0$ on $\B_d \times \B_d$, the above equality holds for all $z, w \in \B_d$. Let the corresponding reproducing kernel Hilbert space be denoted by $\mathcal{H}_K$.

Given a second diagonal holomorphic kernel $L$ over  $\mathbb{B}_d$ and  coefficient Hilbert spaces $\HSE, \HSF$,
a function $\Phi:\mathbb{B}_d\to \cB(\HSF,\HSE)$ is a \df{multiplier} from $\cH_K \otimes \HSF$ to $\cH_L\otimes\HSE$
 if $\Phi F\in \cH_L \otimes \HSE$ whenever $F\in \cH_K\otimes \HSF.$ $\Phi$ induces a bounded operator $M_\Phi:\cH_K\otimes \HSF\to \cH_L\otimes \HSE$ via 
$M_\Phi F= \Phi \cdot F$.  The set  \df{$\Mult(\cH_K\otimes \HSF,\cH_L\otimes \HSE)$} of such multipliers
is naturally a subspace of $\cB(\cH_K\otimes \HSF,\cH_L\otimes \HSE)$. A multiplier $\Phi$ is called
\df{inner}    if $M_{\Phi}$ is partially isometric. Let $M_z=(M_{z_1}, \dots, M_{z_d})$ denote the $d$-tuple of multiplication by the coordinate functions. We will always assume that $M_z$ is bounded on $\mathcal{H}_K$ and $\mathcal{H}_L,$ and we will say that a closed subspace $\cM\subseteq \mathcal{H}_{L}\otimes\mathcal{E}$ is \df{shift-invariant} if $(M_z\otimes I_{\mathcal{E}})\mathcal{M}\subset\mathcal{M}$.

 Now, let $H$ be a separable Hilbert space. Given a bounded commuting $d$-tuple $T = (T_1, T_2, \ldots, T_d)$  on $H$, write $T^{\alpha}=T^{\alpha_1}\cdots T^{\alpha_d}$ for any multi-index $\alpha.$
 \begin{definition}\label{1/k-def}
Given a diagonal holomorphic kernel $K$ on $\mathbb{B}_d$ and a bounded commuting $d$-tuple $T,$ we say that $T$ is a 
\begin{itemize} 
    \item[(i)] \textit{$1/K$-contraction}  if $\sum_{\alpha\in \Z^d_+\backslash \{0\}} b_\alpha T^\alpha T^{*\alpha}$ converges in the strong operator topology and
$I - \sum_{\alpha\in \Z^d_+\backslash \{0\}} b_\alpha  T^\alpha T^{*\alpha} \geq 0;$
\item[(ii)] \textit{pure $1/K$-contraction} if $T$ is a $1/K$-contraction and $\sum_{\alpha\in \Z^d_+} k_\alpha T^\alpha \Delta_T^2 T^{*\alpha}$
converges in the strong operator topology to $I$, where $\Delta_T$ is the positive square root of $I - \sum_{\alpha\in \Z^d_+\backslash \{0\}} b_\alpha  T^\alpha T^{*\alpha}$.
\end{itemize}
 \end{definition}
\noindent The genesis of the concept of a $1/K$-contraction can be traced back to the work of Agler \cite{coanalytic, Aglerhypercontractions}. Further developments can be found in \cite{Pottmodels, EschmeierToth, Yakmodels, Clouatre-Timko, Polynomialarazyenglis, ArazyTAMS, jindalkumar}. Note that, if $K(z,w)=(1-\langle z, w\rangle)^{-1}$ is the Drury-Arveson kernel, then $T$ is a $1/K$-contraction if and only if it is a row contraction. In particular, if $\mathcal{H}_K$ is the Hardy space over the unit disk, then $1/K$-contractions and contractions coincide.

In the classical setting, the Sz.-Nagy–Foiaș characteristic function attached to a contraction is realized as an inner multiplier on a vector-valued Hardy space. Together with its associated operator model, it has revealed deep connections among shift-invariant subspaces, operator-valued inner functions, conservative linear systems, and the structure theory of contractions \cite{Nikolski-spectraltheory, Foias-Bercovici-harmonic}. While this classical theory is tied to the Hardy space and its multipliers, in \cite{Charadvances} the notion of a characteristic function was extended to  \textit{complete Pick spaces}. These are spaces that host an analogue of the classical Pick
interpolation theorem for multipliers. They have been extensively studied and play a fundamental role in multivariable operator theory; notable examples beyond the Szegő kernel include the Dirichlet and Drury-Arveson kernels. A standard reference is \cite{Pickbook}.

 It turns out that the existence of a characteristic function as in \cite{Charadvances} is equivalent to the complete Pick property itself. Thus, outside the complete Pick setting, a characteristic function can no longer live on a single space; it must be realized as a multiplier between two spaces, which makes pairs of kernels its natural habitat.  Remarkable efforts to define a characteristic function in this direction can be found in  \cite{OlofssonChar, EschmeirBergman, EschmeierToth, BhatarHypercontr, Bhattardilations}. In \cite{CharacteristicPickfactor}, a general framework was developed that unified these works. Before we present it, we record the following relevant construction. Given a pure $1/K$-contraction $T$ on $H$,  define $V_T: H \rightarrow \mathcal{H}_K \otimes \overline{\operatorname{ran}} \Delta_T$ by
\begin{align}\label{VT}
V_T h = \sum_{\alpha\in \Z^d_+} k_{\alpha} z^\alpha \otimes \Delta_T T^{*\alpha} h.
\end{align}
Then, for $p\in\mathbb{C}[z_1, \dots, z_d]$  and $x \in \overline{\operatorname{ran}} \Delta_T$, $V_T^*(p \otimes x) = p(T) \Delta_T x$. In particular, $V_T$ is an isometry and
$$V_T T_i^* = (M_{z_i} \otimes I_{\overline{\operatorname{ran}} \Delta_T})^* V_T, \qquad i = 1, 2, \ldots, d,$$
see \cite{ArazyTAMS, Polynomialarazyenglis} for details.  Note that $\ker V^*_T$ is invariant under $M_{z_i} \otimes I_{\overline{\operatorname{ran}} \Delta_T}$. For the definition of an admissible pair, see Section \ref{backgroundsec}.
 \begin{definition}\label{charactdef}
Let $(K, L)$ be an admissible pair of kernels on $\mathbb{B}_d.$
We say that \textit{$L$ admits a characteristic function through $K$} if, for every pure $1/L$-contraction $T$, there exist a Hilbert space $\mathcal{F}$ and a multiplier $\Phi \in \text{Mult}(\mathcal{H}_K\otimes \mathcal{F}, \mathcal{H}_L\otimes \overline{\operatorname{ran}} \Delta_T)$ such that
\[I - V_T V_T^* = M_\Phi M_\Phi^*.\]
 \end{definition}

    \begin{remark}
    In \cite{CharacteristicPickfactor}, the authors worked with unitarily invariant kernels. We adopt their definition in a somewhat more general setting. 
    \end{remark}
 By definition, the existence of a characteristic function requires that certain shift-invariant subspaces (those arising as kernels of the maps $V^*_T$) be representable as ranges of partially isometric multipliers $\Phi$ from $\mathcal{H}_K\otimes F$. Our main result says that this apparently partial requirement is in fact all-or-nothing: if these particular subspaces are so representable, then so is every shift-invariant subspace.

\begin{theorem} \label{mainthm}
Let $(K, L)$ be an admissible pair of kernels on $\mathbb{B}_d.$  
Then, $L$ admits a characteristic function through $K$ if and only if, for every separable Hilbert space $\mathcal{E}$ and closed shift-invariant $\mathcal{M}\subset\mathcal{H}_{L}\otimes \mathcal{E},$ there exists an auxiliary Hilbert space $\mathcal{F}$ and an inner multiplier $\Phi:\mathcal{H}_K\otimes\mathcal{F}\to \mathcal{H}_L\otimes\mathcal{E} $ such that 
\[\mathcal{M}=\Phi(\mathcal{H}_K\otimes\mathcal{F}).\]
\end{theorem}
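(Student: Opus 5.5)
The plan is to prove the two implications separately. The reverse direction should be immediate, and the forward direction should go by realizing an arbitrary shift-invariant subspace as $\ker V_T^*$ for a suitable pure $1/L$-contraction $T$.

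For ($\Leftarrow$): let $T$ be a pure $1/L$-contraction on $H$. By the discussion preceding Definition \ref{charactdef}, $V_T$ is an isometry and $\mathcal{M}:=\ker V_T^*$ is a closed shift-invariant subspace of $\mathcal{H}_L\otimes\overline{\operatorname{ran}}\,\Delta_T$. This coefficient space is separable because $H$ is. The hypothesis then gives an inner $\Phi$ with $\Phi(\mathcal{H}_K\otimes\mathcal{F})=\mathcal{M}$. Since $M_\Phi$ is a partial isometry, $M_\Phi M_\Phi^*$ is the orthogonal projection onto $\operatorname{ran} M_\Phi=\mathcal{M}$. This projection equals $I-V_TV_T^*$, so $L$ admits a characteristic function through $K$.

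For ($\Rightarrow$): fix a separable $\mathcal{E}$ and a closed shift-invariant $\mathcal{M}\subset\mathcal{H}_L\otimes\mathcal{E}$. Put $H=\mathcal{M}^\perp$, which is coinvariant, and let $T_i=P_H(M_{z_i}\otimes I)|_H$, so that $T^{*\alpha}=(M_z^{*}\otimes I)^{\alpha}|_H$. The argument then proceeds in three steps.

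\emph{Step 1: $T$ is a pure $1/L$-contraction.} Let $S=M_z\otimes I_\mathcal{E}$ on $\mathcal{H}_L\otimes\mathcal{E}$. A direct computation on kernel functions, using the expansion \eqref{kernelcoeffbn} for $L$ (and the admissibility assumptions guaranteeing SOT convergence), gives
\[
I-\sum_{\alpha\neq0} b_\alpha S^\alpha S^{*\alpha}=E_0\otimes I_\mathcal{E},
\]
where $E_0$ is the projection onto the constants. Compressing to $H$ gives $\Delta_T^2=P_H(E_0\otimes I)|_H\ge0$. Purity follows because $\sum_\alpha k_\alpha S^\alpha(E_0\otimes I)S^{*\alpha}=I$ (the reproducing property), and this compresses to $H$ since $H$ is coinvariant.

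\emph{Step 2: identify $V_T$ with the inclusion.} Define $W:\overline{\operatorname{ran}}\,\Delta_T\to\mathcal{E}$ by $W\Delta_T h=(E_0\otimes I)h$, i.e.\ evaluation at $0$. It is isometric because $\|\Delta_T h\|^2=\|(E_0\otimes I)h\|^2$. A computation with \eqref{VT} should show $(I\otimes W)V_T h=h$ for $h\in H$: the Taylor coefficients of $h$ are recovered as $k_\alpha\,(S^{*\alpha}h)(0)$. Hence $(I\otimes W)V_T$ is the inclusion $H\hookrightarrow\mathcal{H}_L\otimes\mathcal{E}$.

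\emph{Step 3: transport the characteristic function.} Take $\Phi$ with $I-V_TV_T^*=M_\Phi M_\Phi^*$. Set $\Psi=(I\otimes W)\Phi$, a multiplier into $\mathcal{H}_L\otimes\mathcal{E}$. Then
\[
M_\Psi M_\Psi^*=(I\otimes W)(I-V_TV_T^*)(I\otimes W)^*=P_{\operatorname{ran}(I\otimes W)}-P_H.
\]
The map $W$ need not be onto $\mathcal{E}$, so the range of $I\otimes W$ is $\mathcal{H}_L\otimes\mathcal{E}_0$ with $\mathcal{E}_0=\overline{\operatorname{ran}}\,W\subseteq\mathcal{E}$. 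To fix this, adjoin the remainder: let $\mathcal{F}'=\mathcal{F}\oplus(\mathcal{E}\ominus\mathcal{E}_0)$ and $\Phi'=[\Psi,\ \iota]$, where $\iota$ is the constant inclusion of $\mathcal{E}\ominus\mathcal{E}_0$. The constant inclusion $\iota$ is a multiplier from $\mathcal{H}_K$ to $\mathcal{H}_L$ exactly when $\mathcal{H}_K\subseteq\mathcal{H}_L$ boundedly with $1\in\operatorname{Mult}$. I expect admissibility to supply this (e.g.\ $L/K$ positive), and here I would check the definition in Section \ref{backgroundsec}. I also need the $\iota$-part to contribute exactly $P_{\mathcal{H}_L\otimes(\mathcal{E}\ominus\mathcal{E}_0)}$; that requires an isometry, so I would rather avoid $\iota$. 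The alternative is to observe that $H\subseteq\mathcal{H}_L\otimes\mathcal{E}_0$ and $\mathcal{M}\supseteq\mathcal{H}_L\otimes(\mathcal{E}\ominus\mathcal{E}_0)$. One can then apply the argument to an enlarged situation: replace $T$ by $T\oplus$ (a pure $1/L$-contraction whose defect space is $\mathcal{E}\ominus\mathcal{E}_0$ and whose $\ker V^*=0$, namely the shift $S$ itself on $\mathcal{H}_L\otimes(\mathcal{E}\ominus\mathcal{E}_0)$ viewed adjointly). Doing this makes $W$ onto.

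With $W$ onto, $M_{\Phi'}M_{\Phi'}^*=I-P_H=P_\mathcal{M}$, so $M_{\Phi'}$ is a partial isometry with range $\mathcal{M}$. The main obstacle is Step 2 together with this surjectivity issue: making sure the defect space maps onto all of $\mathcal{E}$. Step 1's SOT convergence may also need care under the paper's admissibility hypotheses.
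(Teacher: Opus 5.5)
Your ($\Leftarrow$) direction and Steps 1--2 agree with the paper, modulo writing $k_\alpha$ where the coefficients $\ell_\alpha$ of $L$ are meant. Step 3, however, has a genuine gap, exactly where you hesitated.

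Since $H=\mathcal{M}^\perp\subseteq\mathcal{H}_L\otimes\mathcal{E}_0$, one has $\mathcal{M}=(\mathcal{H}_L\otimes\mathcal{E}_0\ominus H)\oplus(\mathcal{H}_L\otimes\mathcal{E}_0^\perp)$. The characteristic function of $T$ produces only the first summand. For the second you need a partially isometric multiplier from some $\mathcal{H}_K\otimes\mathcal{F}'$ onto all of $\mathcal{H}_L\otimes\mathcal{E}_0^\perp$. By Lemma~\ref{coiso-mult} (and $K\neq0$), this amounts to $L/K\succeq0$. That condition is not part of admissibility, and it cannot be avoided: for $\mathcal{M}=\mathcal{H}_L\otimes\mathcal{E}$ you get $H=0$, the hypothesis applied to $T$ is empty, and the desired conclusion is precisely $L/K\succeq0$.

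Your direct-sum repair fails as well. Passing to $T\oplus S|_{\mathcal{H}_L\otimes\mathcal{E}_0^\perp}$ means replacing $H$ by $H'=H\oplus(\mathcal{H}_L\otimes\mathcal{E}_0^\perp)$. This does make $W$ onto $\mathcal{E}$, but then $M_{\Phi'}M_{\Phi'}^*=I-P_{H'}=P_{\mathcal{H}_L\otimes\mathcal{E}_0\ominus H}\neq P_{\mathcal{M}}$. Adding a summand with $\ker V^*=0$ adds nothing to the range, whereas you need to add all of $\mathcal{H}_L\otimes\mathcal{E}_0^\perp$.

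The missing idea is to derive $L/K\succeq0$ from the characteristic-function hypothesis itself, using item~\ref{Schurcompl} of Definition~\ref{admissible-def}, which your argument never uses. Run your Steps 1--3 in the scalar case for $\mathcal{M}_\lambda=\{f\in\mathcal{H}_L: f(\lambda)=0\}$. There $H=\mathbb{C}L_\lambda$ and $L_\lambda(0)=1$, so $W$ is onto and you get an inner multiplier with range $\mathcal{M}_\lambda$. By Lemma~\ref{coiso-mult},
\[
\frac{L(z,w)-L(z,\lambda)L(\lambda,w)/L(\lambda,\lambda)}{K(z,w)}\succeq0,\qquad z,w,\lambda\in\mathbb{B}_d .
\]
Letting $\lambda\to\partial\mathbb{B}_d$, item~\ref{Schurcompl} kills the rank-one term, so $L/K\succeq0$. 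A Kolmogorov factorization $L/K=\Gamma(z)\Gamma(w)^*$ then gives, again by Lemma~\ref{coiso-mult}, a co-isometric multiplier $\Gamma\otimes I_{\mathcal{E}_0^\perp}$ from $\mathcal{H}_K\otimes\mathcal{F}'\otimes\mathcal{E}_0^\perp$ onto $\mathcal{H}_L\otimes\mathcal{E}_0^\perp$. Placing it block-diagonally next to $W\Phi$ yields an inner multiplier with range exactly $\mathcal{M}$. This is the paper's construction: Lemma~\ref{temp} together with the block $P_{\mathcal{E}_0^\perp}\Psi_1$.
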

In other words, following the terminology of \cite{BLHpairs}, $L$ admits a characteristic function through $K$ if and only if $(K, L)$ is a Beurling-Lax-Halmos pair. Via  the main result of \cite{BLHpairs}, and  under an additional regularity assumption, we thus obtain the following concrete description. For the definition of a strongly admissible pair of kernels, see Section \ref{backgroundsec}.

\begin{theorem} \label{BLHcorol}
Let $(K, L)$ be a strongly admissible pair of  kernels on $\mathbb{B}_d.$ Then,  $L$ admits a characteristic function through $K$ if and only if there exist Hilbert spaces $\mathcal{F}, \mathcal{G},$ a holomorphic function $Q:\mathbb{B}_d\to\mathcal{B}(\mathcal{G},\mathbb{C})$ and a contractive multiplier $\Phi:\mathcal{H}_L\otimes\mathcal{F}\to \mathcal{H}_L\otimes\mathcal{G}$ such that 
\begin{equation} \label{Aglerdecomp}
    \cfrac{1}{K(z,w)}=Q(z)\big[I-\Phi(z)\Phi(w)^* \big]Q(w)^*, \qquad z, w\in\mathbb{B}_d.
\end{equation}
\end{theorem}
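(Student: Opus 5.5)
Theorem \ref{BLHcorol} follows by combining Theorem \ref{mainthm} with the characterization of Beurling--Lax--Halmos pairs from \cite{BLHpairs}. First, Theorem \ref{mainthm} (already stated above) identifies the existence of a characteristic function for $L$ through $K$ with the property that $(K,L)$ is a Beurling--Lax--Halmos pair. That property says: every closed shift-invariant subspace $\mathcal{M}\subset \mathcal{H}_L\otimes\mathcal{E}$ equals $\Phi(\mathcal{H}_K\otimes\mathcal{F})$ for some inner multiplier $\Phi$. Second, the main theorem of \cite{BLHpairs} asserts the following for strongly admissible pairs. The pair $(K,L)$ is Beurling--Lax--Halmos if and only if $1/K$ admits an Agler-type factorization of the form \eqref{Aglerdecomp}. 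This factorization uses a holomorphic $Q:\mathbb{B}_d\to\mathcal{B}(\mathcal{G},\mathbb{C})$ and a contractive multiplier $\Phi\in\Mult(\mathcal{H}_L\otimes\mathcal{F},\mathcal{H}_L\otimes\mathcal{G})$. Chaining the two equivalences gives the statement.

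Concretely, the steps in order are as follows. (1) Check that the hypotheses line up. A strongly admissible pair is in particular admissible, so Theorem \ref{mainthm} applies. (2) Check that the notion of Beurling--Lax--Halmos pair in \cite{BLHpairs} coincides with the condition in Theorem \ref{mainthm}: quantifying over all separable $\mathcal{E}$, using inner (partially isometric) multipliers from $\mathcal{H}_K\otimes\mathcal{F}$, and requiring equality of ranges. (3) Invoke the theorem of \cite{BLHpairs} in the direction needed, and note that its decomposition is exactly \eqref{Aglerdecomp}.

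The main obstacle is bookkeeping in step (2), since conventions in \cite{BLHpairs} may differ slightly from those here. Possible differences include the order in which $K$ and $L$ appear, whether $\mathcal{E}$ is finite dimensional or arbitrary separable, and whether $Q$ maps into $\mathcal{B}(\mathcal{G},\mathbb{C})$ or its adjoint. If \cite{BLHpairs} only treats scalar $\mathcal{E}$, I would reduce the vector-valued case to it. The plan is to note that the necessity proof of Theorem \ref{mainthm} only produces subspaces of the form $\ker V_T^*$. Conversely, the sufficiency direction there already handles arbitrary $\mathcal{E}$, so the equivalence holds with either convention. Beyond this matching, no new analysis is needed.
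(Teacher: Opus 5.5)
Your strategy of chaining Theorem \ref{mainthm} with \cite[Theorem 7.1]{BLHpairs} is the paper's. However, your step (2) misses the one discrepancy that actually matters. The differences you list (order of $K$ and $L$, dimension of $\mathcal{E}$, form of $Q$) are not the issue. The BLH property in \cite{BLHpairs} concerns subspaces of $\mathcal{H}_L\otimes\mathcal{E}$ that are invariant under the multiplier algebra. Theorem \ref{mainthm}, by contrast, is about shift-invariant subspaces. The subspaces that Definition \ref{charactdef} actually requires, namely $\ker V_T^*$, are only known to be shift-invariant: they come from the intertwining $V_TT_i^*=(M^L_{z_i}\otimes I)^*V_T$, and there is no functional calculus $\varphi(T)$ for arbitrary $\varphi\in\Mult(\mathcal{H}_L)$.

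The direction from a characteristic function to \eqref{Aglerdecomp} survives, since multiplier-invariant subspaces are in particular shift-invariant. The converse breaks. There, \cite[Theorem 7.1]{BLHpairs} only represents multiplier-invariant subspaces as ranges of inner multipliers, and says nothing about $\ker V_T^*$. This is not a notational convention. The range of any multiplier from $\mathcal{H}_K\otimes\mathcal{F}$ is automatically $\Mult(\mathcal{H}_K)$-invariant, so a shift-invariant subspace that is not $\Mult(\mathcal{H}_K)$-invariant could never be such a range. The paper bridges this with \cite[Corollary 3.5]{BLHpairs}: for strongly regular pairs, shift-invariance, $\Mult(\mathcal{H}_K)$-invariance and $\Mult(\mathcal{H}_L)$-invariance coincide. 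This is exactly what the extra conditions in Definition \ref{strong-admissible-def} are for.

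Relatedly, you take \cite{BLHpairs} to be stated for ``strongly admissible'' pairs, but that notion belongs to this paper. You must verify the hypotheses of \cite[Theorem 7.1]{BLHpairs} yourself, and there are two. First, strong admissibility implies strong regularity in the sense of \cite[Definition 2.8]{BLHpairs}, with $\Omega=\mathbb{B}_d$, $\Omega'=t\mathbb{B}_d$ and $r_n(z)=r_nz$. This uses that $\mathbb{B}_d$ is complete Reinhardt, so holomorphic functions on $\mathbb{B}_d\times\mathbb{B}_d$ are locally uniform limits of polynomials. Second, $1/K$ is a difference of two positive kernels, which you get by splitting the coefficients $b_\alpha$ in \eqref{kernelcoeffbn} by sign. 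Finally, your fallback for a scalar-only version of \cite{BLHpairs} would not work. The subspace $\ker V_T^*$ sits in $\mathcal{H}_L\otimes\overline{\operatorname{ran}}\,\Delta_T$, which is genuinely vector-valued, so scalar BLH would not yield characteristic functions. Fortunately that fallback is not needed.
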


Section \ref{backgroundsec} introduces the needed background and
terminology to make precise the statements of Theorems \ref{mainthm} and \ref{BLHcorol}. Section \ref{mainsec} contains the proofs of these results, as well as an example; see Proposition~\ref{exampleProp}.

\section{Preliminaries} \label{backgroundsec}

    Given a self-adjoint kernel $L$, we will write $L\succeq 0$ when  $L$ is positive semi-definite. The usual definition of a complete Pick kernel is in terms of certain multiplier interpolation problems. However, it can be shown that, at least for non-vanishing reproducing kernels $L$ on $\mathbb{B}_d$ satisfying $L(z, 0)=1$ for all $z,$ $L$ is complete Pick if and only if \[1-1/L\succeq 0.\]
   Some recent developments concerning complete Pick kernels can be found in
 \cite{AHMRfact, Chalmsimply, weakPickprod, ColRow, Charadvances, Wickcorona, DBrtoPick, mcctsik}. 
    
    We will work with reproducing kernels satisfying the following regularity assumptions. Note that we will write $M^K_z$ (resp. $M^L_z$) in place of $M_z$ when dealing with pairs of kernels, to indicate that the domain of $M_z$ is $\mathcal{H}_K$ (resp. $\mathcal{H}_{L}$).
\begin{definition}\label{admissible-def}
Given diagonal holomorphic kernels $K, L$ on $\mathbb{B}_d$, we say that $(K, L)$ is \textit{admissible} if
\begin{enumerate}[(i)]
    \item \label{K=nonzero} $K(z,w)\neq 0$ for $ z, w\in\mathbb{B}_d$;

    \item  $M^K_z$ is bounded on $\mathcal{H}_K$;

    \item   \label{contractionitem} $M^L_z$ is bounded on $\mathcal{H}_L$ and a $1/L$-contraction;

    \item  \label{Schurcompl}  For any $z\in\mathbb{B}_d$ and sequence $\{w_n\}\subset\mathbb{B}_d$ with $w_n\to\partial\mathbb{B}_d,$ we have 
    \[\cfrac{L(z, w_n)}{||L_{w_n}||}\to 0. \]
\end{enumerate}
\end{definition}

Regarding item \ref{contractionitem}, assume
$L$ is a kernel such that $S:=M^L_z$ is bounded, the Taylor spectrum of $S$ lies in $\overline{\mathbb{B}_d}$ and $1/L(z,\overline{w})$ extends analytically to a neighborhood of $\overline{\mathbb{B}_d}\times \overline{\mathbb{B}_d}$. Then, one can define $\sum_{\alpha } d_\alpha S^\alpha S^{*\alpha}$, where $1/L(z,w)=\sum_{\alpha}d_{\alpha}z^{\alpha}\overline{w}^{\alpha}$, via Taylor's functional calculus. It can, in fact, be verified that $S$ is a $1/L$-contraction in this case (see \cite[Section 2]{BLHpairs} and the computation in the proof of Lemma 6.4 of the same paper), and thus all such kernels satisfy item~\ref{contractionitem}.

We also require the following stronger admissibility condition, which connects to the ``strong regularity" of \cite{BLHpairs}. Let $\sigma(T)$ denote the Taylor spectrum of $T.$
\begin{definition} \label{strong-admissible-def}

Given a pair $(K, L)$ of diagonal holomorphic kernels  on $\mathbb{B}_d$, we say that $(K, L)$ is \textit{strongly admissible} if it is admissible and

\begin{enumerate}[(i)]
\item \label{S1} $\sigma(M^L_z)=\overline{\mathbb{B}_d}$;
\item    \label{S2} $\textup{Mult}(\mathcal{H}_{K})\subset \textup{Mult}(\mathcal{H}_{L})$;
\item  \label{S3}$L\neq 0$ on $\mathbb{B}_d\times\mathbb{B}_d$;
\item \label{S4} There exists $t>1$ and a sequence $0<r_n\nearrow 1$ such that, setting $K_n(z,w)=K(r_n z, r_n w),\ L_n(z,w)=L(r_n z, r_n w)$,
\begin{enumerate}[(a)]
    \item $1/K(z,\overline{w})$ extends analytically to $t\mathbb{B}_d\times t\mathbb{B}_d$;

\item $1/K_n\to 1/K$ uniformly on compact subsets of $t\mathbb{B}_d\times t\mathbb{B}_d$;

\item  \label{S4c} $L/L_n\succeq 0$, for all $n.$
\end{enumerate}
\end{enumerate}
\end{definition}

Examples of kernels satisfying condition~\ref{S4c} of item~\ref{S4}  include all kernels of the form $e^H,$ where $H$ is a diagonal holomorphic kernel. This class includes, in particular, products of unitarily invariant complete Pick kernels.

Finally, we record the following general lemma. For a proof, see e.g. \cite[Lemma 2.1]{Beurlingfactor}.

\begin{lemma} \label{coiso-mult}
Let $\mathcal{E}$ and $\mathcal{F}$ be Hilbert spaces, let $K$ be a
$B(\mathcal{E})$-valued kernel on a set $X$, let $L$ be a
$B(\mathcal{F})$-valued kernel on $X$, and let
$\Phi:X\to B(\mathcal{E},\mathcal{F})$ be a function.  
Then, $M_{\Phi}$ is a co-isometry from $\mathcal{H}_K$ to $\mathcal{H}_L$ if and only if
\[L(z, w) -  \Phi(z) K(z, w) \Phi(w)^* = 0, \qquad z ,w \in X.\]
\end{lemma}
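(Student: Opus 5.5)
The plan is to test everything on kernel functions. The key tool is the standard adjoint formula
\[M_\Phi^*\big(L(\cdot,w)e\big)=K(\cdot,w)\Phi(w)^*e,\qquad w\in X,\ e\in\mathcal{F},\]
which holds whenever $M_\Phi:\mathcal{H}_K\to\mathcal{H}_L$ is bounded. To verify it, pair with $g\in\mathcal{H}_K$ and use the reproducing property twice:
\[\langle g, M_\Phi^* L(\cdot,w)e\rangle=\langle \Phi g, L(\cdot,w)e\rangle=\langle \Phi(w)g(w),e\rangle=\langle g, K(\cdot,w)\Phi(w)^*e\rangle .\]

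For the forward direction, assume $M_\Phi$ is a co-isometry, i.e.\ $M_\Phi M_\Phi^*=I$. By the formula above, for $z,w\in X$ and $e,f\in\mathcal{F}$,
\[\langle M_\Phi M_\Phi^* L(\cdot,w)e, L(\cdot,z)f\rangle=\langle K(\cdot,w)\Phi(w)^*e, K(\cdot,z)\Phi(z)^*f\rangle=\langle \Phi(z)K(z,w)\Phi(w)^*e,f\rangle .\]
The same pairing with $I$ in place of $M_\Phi M_\Phi^*$ gives $\langle L(z,w)e,f\rangle$. Since $e,f$ are arbitrary, $L(z,w)=\Phi(z)K(z,w)\Phi(w)^*$ for all $z,w$.

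The converse is the only step needing care, because we do not know a priori that $\Phi$ is a multiplier or that $M_\Phi$ is bounded. So I will build the adjoint first. On finite linear combinations $\sum_i L(\cdot,w_i)e_i$, define
\[W\Big(\sum_i L(\cdot,w_i)e_i\Big)=\sum_i K(\cdot,w_i)\Phi(w_i)^*e_i .\]
The kernel identity gives
\[\Big\|\sum_i K(\cdot,w_i)\Phi(w_i)^*e_i\Big\|^2=\sum_{i,j}\langle \Phi(w_j)K(w_j,w_i)\Phi(w_i)^*e_i,e_j\rangle=\Big\|\sum_i L(\cdot,w_i)e_i\Big\|^2 .\]
Hence $W$ is well defined and isometric on a dense subspace of $\mathcal{H}_L$, and it extends to an isometry $W:\mathcal{H}_L\to\mathcal{H}_K$.

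Next I identify $W^*$. For $g\in\mathcal{H}_K$,
\[\langle (W^*g)(w),e\rangle=\langle W^*g, L(\cdot,w)e\rangle=\langle g,K(\cdot,w)\Phi(w)^*e\rangle=\langle \Phi(w)g(w),e\rangle .\]
Thus $\Phi g=W^*g\in\mathcal{H}_L$ for every $g$. So $\Phi$ is a multiplier with $M_\Phi=W^*$, and $M_\Phi M_\Phi^*=W^*W=I$, meaning $M_\Phi$ is a co-isometry.

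The main (mild) obstacle is exactly this converse: one must not presuppose boundedness of $M_\Phi$. Constructing $W$ on kernel functions first and reading off $M_\Phi=W^*$ sidesteps the issue.
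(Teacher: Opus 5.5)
Your proof is correct and complete. The adjoint formula on kernel functions gives the forward direction, and in the converse you handle the fact that boundedness of $M_\Phi$ is not assumed: you first build the isometry $L(\cdot,w)e\mapsto K(\cdot,w)\Phi(w)^*e$ and then identify its adjoint with $M_\Phi$. The paper does not prove the lemma itself but cites \cite[Lemma 2.1]{Beurlingfactor}; your argument is the standard kernel-function proof of this fact.
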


\section{Main Results} \label{mainsec}

Given a closed subspace $\mathcal{M}\subset\mathcal{H}_L\otimes\HE,$ we let $L_{\HM}$ denote its reproducing kernel.

\begin{proof}[Proof of Theorem \ref{mainthm}]
First, assume that, for every separable Hilbert space $\mathcal{E}$ and closed shift-invariant $\mathcal{M}\subset\mathcal{H}_{L}\otimes \mathcal{E},$ there exists an auxiliary Hilbert space $\mathcal{F}$ and an inner multiplier $\Phi:\mathcal{H}_K\otimes\mathcal{F}\to \mathcal{H}_L\otimes\mathcal{E} $ such that 
$\mathcal{M}=\Phi(\mathcal{H}_K\otimes\mathcal{F}).$
Write $L(z,w)=\sum\ell_{\alpha}z^{\alpha}\overline{w}^{\alpha}$ and let $T$ be a pure $1/L$-contraction on $H$. As in \eqref{VT}, define the isometry $V_T: H \rightarrow \mathcal{H}_L \otimes \overline{\operatorname{ran}} \Delta_T$ by
\begin{align}\label{VTagain}
V_T h = \sum_{\alpha\in \Z^d_+} l_{\alpha} z^\alpha \otimes \Delta_T T^{*\alpha} h.
\end{align}
Since $V_T T_i^* = (M^L_{z_i} \otimes I_{\overline{\operatorname{ran}} \Delta_T})^* V_T, \ i = 1, 2, \ldots, d,$ it is clear that $\ker V^*_T $ is shift-invariant. By assumption, there exists an auxiliary Hilbert space $\mathcal{F}$ and an inner multiplier $\Psi:\mathcal{H}_K\otimes\mathcal{F}\to \mathcal{H}_L\otimes  \overline{\operatorname{ran}} \Delta_T $ such that
$\ker V^*_T=\Psi(\mathcal{H}_K\otimes\mathcal{F})$. Since $M_{\Psi}$ is a partial isometry with range $\ker V^*_T$ and $V_T$ an isometry, we obtain
\[M_{\Psi}M_{\Psi}^*=P_{\ker V^*_T}=I-V_TV_T^*.\]
Since $T$ was an arbitrary  pure $1/L$-contraction, we conclude that $L$ admits a characteristic function through $K,$ as desired.

For the converse, assume \(L\) admits a characteristic function through \(K\) and let
\(\mathcal{M}\subset\mathcal{H}_L\otimes\mathcal{E}\) be a closed shift-invariant
subspace, where \(\mathcal{E}\) is a Hilbert space. First, assume that $
\mathcal{M}\neq\mathcal{H}_L\otimes\mathcal{E}. $
Set $\mathcal{N}=\mathcal{M}^{\perp}$
and $S=M_z^L\otimes I_{\mathcal E}.$ Every
\(f\in\mathcal H_L\otimes\mathcal E\) has a unique expansion
\[
f=\sum_{\alpha}z^\alpha\otimes f_\alpha,
\]
where \(f_\alpha\in\mathcal E\). Moreover, for every
\(e\in\mathcal E\),
\[
\begin{aligned}
\langle (S^{*\alpha}f)(0),e\rangle
&=\langle S^{*\alpha}f,1\otimes e\rangle\\
&=\langle f,z^\alpha\otimes e\rangle\\
&=\frac1{l_\alpha}\langle f_\alpha,e\rangle,
\end{aligned}
\]
and therefore $
(S^{*\alpha}f)(0)=l_\alpha^{-1}f_\alpha.$
Now define
\[
\mathcal E_0
=
\overline{\operatorname{span}}
\{f(z):f\in\mathcal N,\ z\in\mathbb B_d\}.
\]
\begin{claim}
    $
\mathcal E_0
=
\overline{\operatorname{span}}\{f(0):f\in\mathcal N\}.$
\end{claim}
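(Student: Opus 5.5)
Write $\mathcal E_1=\overline{\operatorname{span}}\{f(0):f\in\mathcal N\}$. The inclusion $\mathcal E_1\subseteq\mathcal E_0$ is trivial, since each $f(0)$ is among the spanning vectors of $\mathcal E_0$. For the reverse inclusion we show that every coefficient $f_\alpha$ of every $f\in\mathcal N$ lies in $\mathcal E_1$. Then $f(z)=\sum_\alpha z^\alpha f_\alpha$ will lie in $\mathcal E_1$ for each $z\in\mathbb B_d$, because $\mathcal E_1$ is closed and the series converges in $\mathcal E$.

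The key input is that $\mathcal N=\mathcal M^\perp$ is invariant under $S^{*}=(M^L_{z_i}\otimes I_{\mathcal E})^*$, $i=1,\dots,d$, since $\mathcal M$ is shift-invariant. Hence $S^{*\alpha}f\in\mathcal N$ for every $f\in\mathcal N$ and every multi-index $\alpha$. By the computation just before the claim, $(S^{*\alpha}f)(0)=l_\alpha^{-1}f_\alpha$. There are two cases.
\begin{itemize}
\item If $l_\alpha>0$, then $f_\alpha=l_\alpha (S^{*\alpha}f)(0)\in\mathcal E_1$.
\item If $l_\alpha=0$, then $z^\alpha\notin\mathcal H_L$, so every $g\in\mathcal H_L\otimes\mathcal E$ has $g_\alpha=0$; in particular $f_\alpha=0\in\mathcal E_1$.
\end{itemize}
The diagonal structure of $L$ guarantees that the monomials $z^\alpha$ with $l_\alpha>0$ form an orthogonal basis of $\mathcal H_L$ with $\|z^\alpha\|^2=1/l_\alpha$. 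This is what makes the expansion $f=\sum_\alpha z^\alpha\otimes f_\alpha$ and the computation before the claim legitimate.

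It remains to justify the pointwise identity $f(z)=\sum_\alpha z^\alpha f_\alpha$ with the series converging in $\mathcal E$. For each $e\in\mathcal E$,
\[
\langle f(z),e\rangle=\langle f,L_z\otimes e\rangle=\sum_\alpha l_\alpha \langle f,z^\alpha\otimes e\rangle \ol{\ol{z}^{\alpha}}\cdots,
\]
and this reduces, using $\langle f,z^\alpha\otimes e\rangle=\langle f_\alpha,e\rangle/l_\alpha$, to $\sum_\alpha z^\alpha\langle f_\alpha,e\rangle$. Hence $f(z)$ is the weak limit of partial sums of vectors in $\mathcal E_1$. Since $\mathcal E_1$ is a closed subspace, it is weakly closed, so $f(z)\in\mathcal E_1$, giving $\mathcal E_0\subseteq\mathcal E_1$.

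The only point needing care is this convergence and reproducing-property step. Working weakly and using weak closedness of closed subspaces avoids having to establish norm convergence of the expansion.
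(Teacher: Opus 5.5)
Your argument is correct and is essentially the paper's proof. Co-invariance of $\mathcal N$ gives $S^{*\alpha}f\in\mathcal N$, the coefficient identity then gives $f_\alpha=l_\alpha(S^{*\alpha}f)(0)\in\mathcal E_1$, and summing the power series places $f(z)$ in $\mathcal E_1$. Your weak-convergence justification of $f(z)=\sum_\alpha z^\alpha f_\alpha$ fills in a step the paper leaves implicit; in fact Cauchy--Schwarz with $\sum_\alpha l_\alpha|z^\alpha|^2<\infty$ and $\sum_\alpha\|f_\alpha\|^2/l_\alpha<\infty$ gives norm convergence. Your case $l_\alpha=0$ is vacuous, since $1\in\mathcal H_L$ and boundedness of $M^L_z$ force $z^\alpha\in\mathcal H_L$ for every $\alpha$.
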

    \begin{proof}[Proof of Claim]
    Let
$
g=\sum_\alpha z^\alpha\otimes g_\alpha
\in\mathcal N.$
Since \(\mathcal N\) is co-invariant for \(S\),
$
S^{*\alpha}g\in\mathcal N.$
Hence,
\[
(S^{*\alpha}g)(0)
\in
\overline{\operatorname{span}}\{f(0):f\in\mathcal N\}, \quad \alpha\in\Z_+^d.
\]
Further,
\[
g_\alpha
=
l_\alpha(S^{*\alpha}g)(0)
\in
\overline{\operatorname{span}}\{f(0):f\in\mathcal N\}, \quad \alpha\in\Z_+^d,
\]
and so $g(z)
=
\sum_\alpha z^\alpha g_\alpha
\in
\overline{\operatorname{span}}\{f(0):f\in\mathcal N\}$ for all $z\in\mathbb{B}_d.$ Taking closed linear spans gives
\[
\mathcal E_0
\subseteq
\overline{\operatorname{span}}\{f(0):f\in\mathcal N\},
\]
proving the claim.
    \end{proof}
 Since every function in \(\mathcal N\) takes values in
\(\mathcal E_0\), we may replace \(\mathcal E\) by
\(\mathcal E_0\). Note that, since $\mathcal{N}\neq 0$, $ \dim \mathcal{E}_0\ge 1.$ 
Also, 
$
\mathcal N
\subseteq
\mathcal H_L\otimes\mathcal E_0,
$
and
\begin{equation}\label{Mdecomp}
\mathcal M
=
(\mathcal H_L\otimes\mathcal E_0\ominus\mathcal N)
\oplus
(\mathcal H_L\otimes\mathcal E_0^\perp).
\end{equation}
Now, define
$T
=
P_{\mathcal N}
S|_{\mathcal N}$. Since $S$ is a (pure) $1/L$-contraction and $\HN$ is co-invariant,  $T$ is a pure $1/L$-contraction as well.
Write
\[
\frac1L
=
1-
\sum_{\alpha\neq0}
c_\alpha
z^\alpha\overline w^\alpha,
\]
and define
\[
V:\mathcal N\to\mathcal E_0,
\qquad
Vf=f(0).
\]
$V$ is not the zero operator. Also, it is easily verified that  $\mathcal{H}_L\otimes\HE_0$ is reducing for $S$. Thus, we may regard, for the moment, $S$ as an operator defined on $\mathcal{H}_L\otimes\HE_0$, and write
\[
\begin{aligned}
\Delta_T^2
&=
P_{\mathcal N}
-
\sum_{\alpha\neq0}
c_\alpha
P_{\mathcal N}
S^\alpha
S^{*\alpha}
|_{\mathcal N}\\
&=
P_{\mathcal N}
\Big(
I-
\sum_{\alpha\neq0}
c_\alpha
S^\alpha
S^{*\alpha}
\Big)
|_{\mathcal N}\\
&=
P_{\mathcal N}
P_{1\otimes\mathcal E_0}
|_{\mathcal N}\\
&=
V^*V,
\end{aligned}
\]
where the last equality follows from $
P_{1\otimes\mathcal E_0}f
=
1\otimes f(0) $ (note that $||1||_{\mathcal{H}_L}=1$). Let
$
V=W|V| $
be the polar decomposition of $V$. Then
\[
|V|
=
(V^*V)^{1/2}
=
\Delta_T,
\]
and since
$
\overline{\operatorname{ran}V}
=
\mathcal E_0$ in view of our earlier claim,
the operator
\[
W:
\ol{\operatorname{ran}}\Delta_T
\to
\mathcal E_0
\]
is unitary. Next, consider the isometry
$
V_T:
\mathcal N
\to
\mathcal H_L
\otimes
\overline{\operatorname{ran}\Delta_T}$
with
\[
V_Th
=
\sum_\alpha
l_\alpha
z^\alpha
\otimes
\Delta_T
T^{*\alpha}h.
\]
Since $\HN$ is co-invariant for $S,$ we obtain 
\[
V_Th
=
\sum_\alpha
l_\alpha
z^\alpha
\otimes
\Delta_T
S^{*\alpha}h.
\]
Hence,
\[
(I\otimes W)V_Th
=
\sum_\alpha
l_\alpha
z^\alpha
\otimes
VS^{*\alpha}h.
\]
Using the coefficient identity proved above,
\[
VS^{*\alpha}h
=
l_\alpha^{-1}h_\alpha,
\]
and therefore
\[
(I\otimes W)V_Th
=
\sum_\alpha
z^\alpha
\otimes
h_\alpha
=
h.
\]
Thus,
$
(I\otimes W)V_T
=
P_{\mathcal N}.$
Since $
(I\otimes W)(\operatorname{ran}V_T)
=
\mathcal N,$ and  $I\otimes W:\mathcal{H}_{L}\otimes \ol{\operatorname{ran}}\Delta_T \to \mathcal{H}_{L}\otimes \HE_0 $ is unitary, we obtain
\begin{equation} \label{Wrange}
(I\otimes W)( \ker V_T^*)
=
\mathcal H_L\otimes\mathcal E_0
\ominus
\mathcal N.
\end{equation}
Now, since \(L\) admits a characteristic function through \(K\) and
\(\ker V_T^*\) is shift-invariant, there exist a Hilbert space
\(\mathcal F\) and a multiplier
\[
\Phi:
\mathcal H_K\otimes\mathcal F
\to
\mathcal H_L\otimes
\overline{\operatorname{ran}\Delta_T}
\]
such that
\[
P_{\ker V_T^*}
=
I-
V_TV_T^*
=
M_\Phi M_\Phi^*.
\]
In particular,  $M_{\Phi}$ is a partial isometry and $
\operatorname{ran}M_\Phi
=
\ker V_T^*.$ Define $
\Psi_0(z)=W\Phi(z),$
so that
\[
M_{\Psi_0}
=
(I\otimes W)M_\Phi.
\]
Since $I\otimes W$ is unitary, $\Psi_0$ yields a partially
isometric multiplier
\[
M_{\Psi_0}:
\mathcal H_K\otimes\mathcal F
\to
\mathcal H_L\otimes\mathcal E_0
\]
whose range, by \eqref{Wrange}, is
\[
\operatorname{ran}M_{\Psi_0}
=
(I\otimes W)(\operatorname{ran}M_\Phi)
=
(I\otimes W)(\ker V_T^*)
=
\mathcal H_L\otimes\mathcal E_0
\ominus
\mathcal N.
\]
\begin{lemma}\label{temp}
In the setting of the proof, $L/K\succeq 0.$
\end{lemma}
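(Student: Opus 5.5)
We aim to show $L/K\succeq 0$ by exploiting the partially isometric multiplier $\Psi_0$ constructed just before the statement, together with Lemma \ref{coiso-mult}. The idea is to compute the reproducing kernel of $\operatorname{ran}M_{\Psi_0}=\mathcal H_L\otimes\mathcal E_0\ominus\mathcal N$ in two ways.

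\emph{First expression.} Since $M_{\Psi_0}$ is a partial isometry with range $\mathcal R:=\mathcal H_L\otimes\mathcal E_0\ominus\mathcal N$, the operator $M_{\Psi_0}$ is a co-isometry onto $\mathcal R$ (viewed as an RKHS with kernel $L_{\mathcal R}$). Lemma \ref{coiso-mult} then gives
\[
L_{\mathcal R}(z,w)=\Psi_0(z)K(z,w)\Psi_0(w)^*.
\]
\emph{Second expression.} The kernel of $\mathcal R$ is $L(z,w)I_{\mathcal E_0}-L_{\mathcal N}(z,w)$. Hence
\[
L(z,w)I_{\mathcal E_0}-L_{\mathcal N}(z,w)=\Psi_0(z)K(z,w)\Psi_0(w)^*.
\]

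The next step is to extract a scalar statement. Evaluate at $w=0$. Since $K(z,0)=1$, this gives $L(z,0)I-L_{\mathcal N}(z,0)=\Psi_0(z)\Psi_0(0)^*$. Dividing the displayed identity by $K$, which is nonvanishing by admissibility (i), yields
\[
\frac{L}{K}\,I_{\mathcal E_0}=\Psi_0(z)\Psi_0(w)^*+\frac{L_{\mathcal N}(z,w)}{K(z,w)}.
\]
The difficulty is that $L_{\mathcal N}/K$ need not be positive a priori, so this identity alone does not settle the claim.

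To get around this, I would exploit freedom in the choice of $\mathcal M$. Positivity of $L/K$ depends only on $K$ and $L$, and the hypothesis (that $L$ admits a characteristic function through $K$) applies to every pure $1/L$-contraction. I would therefore run the same construction with the special choice $\mathcal E=\mathbb C$ and $\mathcal M=\{f\in\mathcal H_L: f(0)=0\}$. Then $\mathcal N=\mathbb C\cdot 1$ and $\mathcal E_0=\mathbb C$. Moreover $L_{\mathcal N}(z,w)=1$, since $\|1\|=1$. The identity becomes
\[
L(z,w)-1=\Psi_0(z)K(z,w)\Psi_0(w)^*.
\]
Since $\mathcal M\ne\mathcal H_L$, the construction applies. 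Here $T=0$ on $\mathbb C$ and $\Delta_T=1$, so $V_T$ is the inclusion of the constants.

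Now write $\Psi_0(z)=\Psi_0(0)+\tilde\Psi(z)$. Every element of $\operatorname{ran}M_{\Psi_0}=\mathcal M$ vanishes at $0$, so $\Psi_0(0)x=(\Psi_0(1\otimes x))(0)=0$ for all $x$. Hence $\Psi_0(0)=0$, and therefore $\Psi_0(z)=\sum_{i}z_i\Psi_i(z)$ for suitable holomorphic $\Psi_i$. This alone still does not give $L/K$. A cleaner route is as follows: the map $f\mapsto M_{\Psi_0}^*$ composed with restriction shows that $\mathcal H_{L-1}$ is contained contractively in the range of $M_{\Psi_0}$. I expect this step, deriving $L/K\succeq0$ from $L-1=\Psi_0K\Psi_0^*$, to be the main obstacle.

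My first attempt at that step would use diagonality. Average over the torus action $z\mapsto e^{i\theta}z$: both $L$ and $K$ are invariant, so $L-1=\int \Psi_0(e^{i\theta}z)K(z,w)\Psi_0(e^{i\theta}w)^*\,d\theta$. Expanding $\Psi_0$ in homogeneous components then gives $L-1=\sum_{n\ge1}\Psi^{(n)}(z)K\Psi^{(n)}(w)^*$. From this I would try to conclude that $(L-1)/K=\sum_n\Psi^{(n)}(z)\Psi^{(n)}(w)^*\succeq0$, and hence that $L/K=1/K+(L-1)/K$. The remaining term $1/K$ is not positive, so this also falls short.

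Instead I would take the fallback: apply the construction with $\mathcal M=\{0\}\oplus$ ... (i.e., $\mathcal N$ chosen so that $L_{\mathcal N}\to0$). Take $\mathcal N$ spanned by a normalized kernel $L_{w_n}/\|L_{w_n}\|$. Then $L_{\mathcal N}(z,w)=L(z,w_n)L(w_n,w)/\|L_{w_n}\|^2$. Using admissibility (iv), letting $w_n\to\partial\mathbb B_d$ gives $L_{\mathcal N}\to0$ pointwise. Consequently $L/K$ is a pointwise limit of the positive kernels $\Psi_0K\Psi_0^*/K=\Psi_0\Psi_0^*$, and is therefore positive semidefinite.
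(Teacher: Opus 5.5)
Your final (fallback) argument is correct and is essentially the paper's proof: taking $\mathcal N=\C\,L_{w_n}$ gives $\mathcal M=\{f\in\mathcal H_L: f(w_n)=0\}$, which is shift-invariant because $(M^L_{z_i})^*L_{w}=\overline{w_i}\,L_{w}$ (you should state this check). Then Lemma \ref{coiso-mult} yields $(L-L_{\mathcal N})/K=\Psi_0\Psi_0^*\succeq 0$, and item (iv) of Definition \ref{admissible-def} sends $L_{\mathcal N}\to 0$ pointwise as $w_n\to\partial\B_d$, exactly as the paper does with $\mathcal M_\lambda$; the earlier detours through $\{f: f(0)=0\}$ and torus averaging are unnecessary and can be dropped.
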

\begin{proof}[Proof of Lemma \ref{temp}]
Assume $\dim \HE=1,$ in which case $\dim \HE_0=1$ and thus $\HE_0=\HE.$ We have
$M_{\Psi_0}:
\mathcal H_K\otimes\mathcal F
\to
\mathcal H_L\otimes\mathcal E$ and 
\[
\operatorname{ran}M_{\Psi_0}
=
\mathcal H_L\otimes\mathcal E
\ominus
\mathcal N=\HM.
\]
Since $M_{\Psi_0}$ is partially isometric, we obtain, by Lemma \ref{coiso-mult}, 
\[\cfrac{L_{\mathcal{M}}}{K}\succeq 0 \]
whenever $\mathcal{M}\subsetneq \mathcal{H}_{L}\otimes\HE\cong \mathcal{H}_L$ is shift-invariant. In particular, by choosing \[\mathcal{M}=\mathcal{M}_{\lambda}=\{f\in\mathcal{H}_L \ |\ f(\lambda)=0\},\qquad \lambda\in\mathbb{B}_d,\] we obtain
\[\cfrac{L(z,w)-\cfrac{L(z, \lambda)L(\lambda, w)}{L(\lambda,\lambda)}}{K(z,w)}=\cfrac{L_{\mathcal{M}_{\lambda}}(z,w)}{K(z,w)}\succeq 0, \qquad z,w,\lambda\in\mathbb{B}_d. \]
Choosing $\lambda=\lambda_n$ with $\lambda_n\to\partial\mathbb{B}_d$ and letting $n\to\infty$ in the above inequality, we obtain, in view of item~\ref{Schurcompl} of Definition \ref{admissible-def}, $L/K\succeq 0,$ as desired.
\end{proof}

\begin{remark}
    Note that the proof of the reverse implication in Theorem \ref{mainthm} (i.e. the BLH property implies that $L$ admits a characteristic function through $K$) does not require items \ref{K=nonzero} and \ref{Schurcompl} of Definition \ref{admissible-def}.
\end{remark}

Returning to the main proof, let
\[
P_{\mathcal E_0^\perp}:\mathcal E\to\mathcal E_0^\perp
\]
denote the orthogonal projection. Multiplication by $P_{\mathcal E_0^\perp}$ defines a
partially isometric multiplier
\[
M_{P_{\mathcal E_0^\perp}}:
\mathcal H_L\otimes\mathcal E 
\to
\mathcal H_L\otimes\mathcal E_0^\perp
\]
whose range is $
\mathcal H_L\otimes\mathcal E_0^\perp.$ Finally, since $L/K\succeq 0$, Lemma \ref{coiso-mult} yields the existence of a Hilbert space $\mathcal{F}'$ and a partially isometric multiplier $\Psi_1:\mathcal{H}_{K}\otimes\HF'\to \mathcal{H}_{L}\otimes\HE$ such that $\operatorname{ran} M_{\Psi_1}= \mathcal{H}_{L}\otimes\HE$.
 Define the block operator $\Psi(z): \HF\oplus \HF'\to  \HE=\HE_0\oplus \HE_0^{\perp}$ by
\[
\Psi(z)
=
\begin{bmatrix}
\Psi_0(z) & 0\\
0 & P_{\mathcal E_0^\perp}\Psi_1(z)
\end{bmatrix},
\]
which yields a multiplier $
\Psi:
\mathcal H_K\otimes
(\mathcal F\oplus \mathcal{F}')
\longrightarrow
\mathcal H_L\otimes\mathcal E. $
Then, $M_\Psi$ is   partially isometric  and, in view of \eqref{Mdecomp},
\[
\operatorname{ran}M_\Psi
=
(\mathcal H_L\otimes\mathcal E_0\ominus\mathcal N)
\oplus
(\mathcal H_L\otimes\mathcal E_0^\perp)
=
\mathcal M,
\]
completing the proof. Note that the case
$\mathcal{M}=\mathcal{H}_L\otimes\mathcal{E} $ is addressed by the positivity condition $L/K\succeq 0,$ as we just observed.
\end{proof}
A straightforward application of Lemma~\ref{coiso-mult} obtains:
\begin{corollary} \label{L_M/K>>0}
Let $(K, L)$ be an admissible pair of kernels on $\mathbb{B}_d.$ Then, $L$ admits a characteristic function through $K$ if and only if, for every Hilbert space $\HE$
and every shift-invariant $\HM\subset\mathcal{H}_L\otimes \HE$,
\[\cfrac{L_{\mathcal{M}}}{K}\succeq 0.\]
\end{corollary}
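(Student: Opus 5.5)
By Theorem~\ref{mainthm}, it suffices to show the following equivalence: for a fixed Hilbert space $\HE$ and a closed shift-invariant $\HM\subset\mathcal H_L\otimes\HE$, there is a Hilbert space $\HF$ and an inner multiplier $\Phi:\mathcal H_K\otimes\HF\to\mathcal H_L\otimes\HE$ with $\HM=\Phi(\mathcal H_K\otimes\HF)$ if and only if $L_{\HM}/K\succeq 0$. Here $L_{\HM}$ is the $\mathcal B(\HE)$-valued reproducing kernel of $\HM$. The point is that $M_\Phi$ is a partial isometry with range $\HM$ exactly when $M_\Phi$, viewed as a map into the reproducing kernel Hilbert space $\HM$ (with kernel $L_{\HM}$), is a co-isometry. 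Lemma~\ref{coiso-mult} converts that into a kernel identity.

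For the forward direction, take such an inner $\Phi$ and regard $M_\Phi$ as an operator from $\mathcal H_K\otimes\HF$ onto $\HM$. A partial isometry is a co-isometry onto its range, so Lemma~\ref{coiso-mult}, applied with the kernels $K\otimes I_{\HF}$ and $L_{\HM}$, gives
\[
L_{\HM}(z,w)=\Phi(z)\Phi(w)^*K(z,w).
\]
Since $K$ does not vanish (item~\ref{K=nonzero} of Definition~\ref{admissible-def}), we can divide to get $L_{\HM}/K=\Phi(z)\Phi(w)^*$, which is manifestly positive semi-definite.

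For the converse, suppose $L_{\HM}/K\succeq 0$. Kolmogorov factorization then gives a Hilbert space $\HF$ and a function $\Phi:\mathbb B_d\to\mathcal B(\HF,\HE)$ with $L_{\HM}(z,w)/K(z,w)=\Phi(z)\Phi(w)^*$, that is,
\[
L_{\HM}(z,w)-\Phi(z)K(z,w)\Phi(w)^*=0 .
\]
Lemma~\ref{coiso-mult} (the ``if'' direction) then says that $M_\Phi$ is a co-isometry from $\mathcal H_K\otimes\HF$ onto $\HM$. Composing with the isometric inclusion $\HM\hookrightarrow\mathcal H_L\otimes\HE$, we get a multiplier into $\mathcal H_L\otimes\HE$ that is partially isometric and has range exactly $\HM$. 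If $\HE$ is separable, $\HF$ can be chosen separable, though Theorem~\ref{mainthm} does not require this. Quantifying over all $\HE$ and $\HM$ and invoking Theorem~\ref{mainthm} completes the proof.

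The main point needing care is that Theorem~\ref{mainthm} is stated for separable $\HE$, whereas the corollary quantifies over every Hilbert space $\HE$. I would handle the non-separable case by reduction. Elements of $\mathcal H_L\otimes\HE$ are $\HE$-valued, so positivity of $L_{\HM}/K$ means positivity of the finite sums $\sum_{i,j}\langle L_{\HM}(z_i,z_j)e_j,e_i\rangle/K(z_i,z_j)$. Each such sum involves only finitely many vectors $e_j$, so it suffices to check positivity after compressing to separable subspaces $\HE'\subset\HE$. Such a compression corresponds to the shift-invariant subspace $P_{\mathcal H_L\otimes\HE'}\HM$, but that subspace need not be closed or invariant. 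So I would instead avoid the reduction entirely and rerun the proof of Theorem~\ref{mainthm}, which nowhere uses separability beyond the setting of Definition~\ref{charactdef}. If that fails, I would read the corollary as tacitly restricted to separable $\HE$.
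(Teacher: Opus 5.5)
Your argument is correct and follows the paper's route. The paper derives the corollary directly from Theorem~\ref{mainthm} and Lemma~\ref{coiso-mult}: an inner $\Phi$ with range $\HM$ is a co-isometry onto the reproducing kernel Hilbert space $\HM$, and, together with a Kolmogorov factorization of $L_{\HM}/K$ for the converse, this is equivalent to $L_{\HM}/K\succeq 0$.

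On the separability point, which the paper does not address: your ``rerun the proof'' idea would not help, because the hypothesis only supplies characteristic functions for contractions on separable spaces, and $\HN=\HM^\perp$ may be non-separable. Your abandoned reduction can instead be repaired. Start from the span of the finitely many test vectors and repeatedly adjoin the Taylor coefficients of $P_{\HM}(\mathcal H_L\otimes\HE_n)$; each step stays separable because $\mathcal H_L$ is. This yields a separable $\HE'$ for which $\mathcal H_L\otimes\HE'$ reduces $P_{\HM}$. Then $\HM\cap(\mathcal H_L\otimes\HE')$ is closed and shift-invariant, and its kernel is the compression of $L_{\HM}$ to $\HE'$.
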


 Note that Theorem~\ref{mainthm} says, in the language of \cite{BLHpairs}, that $L$ admits a characteristic function through $K$ if and only if $(K, L)$ has the BLH property, at least with respect to shift-invariant subspaces $\HM$. We now establish Theorem~\ref{BLHcorol} by invoking the characterization of the BLH property from \cite{BLHpairs}.
\begin{proof}[Proof of Theorem~\ref{BLHcorol}]
Assume $(K, L)$ is strongly admissible. Then, in the terminology of \cite{BLHpairs}, $(K, L)$ is strongly regular. In particular, any pair satisfying Definition \ref{strong-admissible-def} also satisfies \cite[Definition 2.8]{BLHpairs} with $\Omega=\mathbb{B}_d, \Omega'=t\mathbb{B}_d$ and $r_n(z)=r_n z.$ Note that $\mathbb{B}_d$ is complete Reinhardt, and so any holomorphic function on $\mathbb{B}_d\times\mathbb{B}_d$ is a uniform-on-compacta limit of polynomials. Moreover, by \cite[Corollary 3.5]{BLHpairs}, a closed $\mathcal{M}\subset\mathcal{H}_L\otimes\HE$ is shift-invariant if and only if it is $\textup{Mult}(\mathcal{H}_K)$-invariant if and only if it is $\textup{Mult}(\mathcal{H}_L)$-invariant. Now, Theorem~\ref{mainthm} says that $(K,L)$ admits a characteristic function if and only if it is a Beurling-Lax-Halmos pair. Since $1/K$ decomposes as the difference of two positive reproducing kernels (simply separate the negative from the positive coefficients in the power series expansion), \cite[Theorem 7.1]{BLHpairs} yields \eqref{Aglerdecomp}.
\end{proof}

There are important special cases where the decomposition \eqref{Aglerdecomp} can be considerably simplified. Recall that a kernel $K$ on $\mathbb{B}_d$ is a regular unitarily invariant kernel if it has the form
\[K(z, w)=\sum_{n=0}^{\infty}k_n\langle z, w\rangle^n \]
with $\lim_n k_{n+1}/k_n=1.$

\begin{proposition}\label{exampleProp}
Let $(K, L)$ be an admissible pair of kernels on $\mathbb{B}_d$ such that $L$ is regular unitarily invariant and 
\[K(z, w)=\cfrac{1}{(1-\langle z, w\rangle)^{\rho}}, \qquad z, w\in\mathbb{B}_d,\]
with $\rho>0.$ Then, $L$ admits a characteristic function through the kernel $K$ if and only if
$0 < \rho \leq 1$ and $L/K\succeq 0$. 
\end{proposition}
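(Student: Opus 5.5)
We reduce everything to Corollary~\ref{L_M/K>>0}. By that corollary, $L$ admits a characteristic function through $K$ if and only if $L_{\mathcal M}/K\succeq 0$ for every shift-invariant $\mathcal M\subset\mathcal H_L\otimes\mathcal E$. Since $1/K=(1-\langle z,w\rangle)^{\rho}$, this condition reads
\[
(1-\langle z,w\rangle)^{\rho}\,L_{\mathcal M}(z,w)\succeq 0 .
\]
It is exactly the kind of condition studied for pairs involving powers of the Drury--Arveson kernel in \cite{BLHpairs}.

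\textbf{Necessity.} The positivity $L/K\succeq 0$ is the case $\mathcal M=\mathcal H_L$; alternatively it follows from Lemma~\ref{temp}. To see that $\rho\le 1$, we test the condition on a suitable $\mathcal M$. The natural first choice is $\mathcal M=\mathcal M_0=\{f\in\mathcal H_L: f(0)=0\}$, for which $L_{\mathcal M_0}=L-1$ (using $L(z,0)=1$). If $\rho>1$, then $(1-\langle z,w\rangle)^{\rho}$ has Taylor coefficients $\binom{\rho}{n}(-1)^n$, and the coefficient of $\langle z,w\rangle^2$ in $(1-\langle z,w\rangle)^{\rho}$ equals $\rho(\rho-1)/2>0$. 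Writing $L=\sum\ell_n\langle z,w\rangle^n$, the product $(1-\langle z,w\rangle)^{\rho}(L(z,w)-1)$ has a $\langle z,w\rangle$-coefficient $\ell_1$ and subsequent coefficients that must all be nonnegative for a unitarily invariant kernel to be positive. I expect this single test subspace may fail to produce a negative coefficient, so I would instead use the vector-valued case $\mathcal E=\mathbb C^N$ with the shift-invariant subspace generated by high-degree homogeneous polynomials, or $\mathcal M_\lambda$ as in Lemma~\ref{temp}, and exploit the regularity $\ell_{n+1}/\ell_n\to1$. The goal is to show that the coefficients of $(1-t)^{\rho}\sum_{n\ge m}\ell_n t^{n-m}$ become negative for some index when $\rho>1$. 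The heuristic is that, after truncation and normalization, $\sum_{n\ge m}\ell_n t^{n-m}/\ell_m$ approaches $(1-t)^{-1}$ in coefficients as $m\to\infty$, by regularity. Then $(1-t)^{\rho-1}$ appears in the limit, and this has a negative coefficient $-(\rho-1)$ at $t^1$ when $\rho>1$. Concretely, I would take $\mathcal M=\overline{\operatorname{span}}\{z^\alpha:|\alpha|\ge m\}$, whose reproducing kernel is $\sum_{n\ge m}\ell_n\langle z,w\rangle^n$. Positivity of $(1-\langle z,w\rangle)^\rho L_{\mathcal M}$ forces every coefficient of the one-variable series to be $\ge 0$, because the $\langle z,w\rangle^n$ are linearly independent positive kernels with distinct homogeneous degrees. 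The coefficient of $t^{m+1}$ is $\ell_{m+1}-\rho\ell_m$, which is negative for large $m$ when $\rho>1$, since $\ell_{m+1}/\ell_m\to1$. This is clean and settles necessity.

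\textbf{Sufficiency.} For $0<\rho\le1$ we write
\[
(1-\langle z,w\rangle)^{\rho}L_{\mathcal M}=(1-\langle z,w\rangle)^{\rho-1}\cdot\bigl[(1-\langle z,w\rangle)L_{\mathcal M}\bigr].
\]
The factor $(1-\langle z,w\rangle)^{\rho-1}=(1-\langle z,w\rangle)^{-(1-\rho)}$ is positive (a power of the Drury--Arveson kernel with nonnegative exponent). The Schur product theorem then reduces the claim to showing $(1-\langle z,w\rangle)L_{\mathcal M}\succeq0$, i.e.\ to the case $\rho=1$. This is not immediate, because $L/K\succeq0$ with $\rho<1$ does not by itself give $(1-\langle z,w\rangle)L\succeq 0$. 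So the plan is to invoke Theorem~\ref{BLHcorol}, or the corresponding result of \cite{BLHpairs} for $K=$ a power of the Drury--Arveson kernel. Its Agler decomposition \eqref{Aglerdecomp} with $1/K=(1-\langle z,w\rangle)^\rho$ can be produced from $(1-\langle z,w\rangle)^\rho=(1-\langle z,w\rangle)\cdot(1-\langle z,w\rangle)^{\rho-1}$, together with $L/K\succeq 0$ to build the contractive multiplier $\Phi$.

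\textbf{Main obstacle.} The main obstacle is this sufficiency step, namely verifying that $L/K\succeq0$ plus $\rho\le1$ yields $L_{\mathcal M}/K\succeq0$ for all $\mathcal M$. I would try first to show directly that $(1-S S^*)$-type positivity holds on $\mathcal H_L$ compressed to $\mathcal M$, using $L/K\succeq0$ to factor $M_z^L$ as a $1/K$-contraction.
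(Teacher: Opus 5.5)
Your necessity argument is correct and is essentially the paper's. The test subspaces $\overline{\operatorname{span}}\{z^\alpha:|\alpha|\ge m\}$ in Corollary~\ref{L_M/K>>0} force $\ell_{m+1}-\rho\,\ell_m\ge 0$ for all $m$. Concluding $\rho\le 1$ directly from $\ell_{m+1}/\ell_m\to 1$ is a legitimate shortcut for the paper's appeal to \cite[Corollary 6.3]{HartzIsomorphism}, and $L/K\succeq 0$ is indeed the case $\mathcal{M}=\mathcal{H}_L$.

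The genuine gap is sufficiency, which you leave unproved, and neither route you sketch can close it.

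\begin{itemize}
\item The Schur-product reduction to $\rho=1$ is impossible, not merely ``not immediate''. Take $L=K$ with $0<\rho<1$: all hypotheses hold, yet $(1-\langle z,w\rangle)L=(1-\langle z,w\rangle)^{1-\rho}$ has the negative coefficient $-(1-\rho)$.
\item Invoking Theorem~\ref{BLHcorol} requires strong admissibility, which the proposition does not assume. Moreover, the obvious reading of your factorization ($Q(z)Q(w)^*=(1-\langle z,w\rangle)^{\rho-1}$, $\Phi(z)=z$) makes $\Phi$ contractive only if $(1-\langle z,w\rangle)L\succeq 0$, the same unavailable condition.
\end{itemize}

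The missing idea is that $\rho\le 1$ makes $K$ a complete Pick kernel:
\[1-\frac{1}{K(z,w)}=1-(1-\langle z,w\rangle)^{\rho}=\sum_{n\ge 1}(-1)^{n+1}\binom{\rho}{n}\langle z,w\rangle^n ,\]
and every coefficient here is $\ge 0$ when $0<\rho\le 1$. Together with $L/K\succeq 0$, $K$ is then a complete Pick factor of $L$. The paper concludes from the Beurling-type theorem for such factors combined with Corollary~\ref{L_M/K>>0}.

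You can also check $L_{\mathcal M}/K\succeq 0$ directly; this is what your last sentence gestures at. Write $1-1/K=b(z)b(w)^*$, where $b=(b_j)$ is a row of weighted monomials. Then $L_{\mathcal M}/K$ is the kernel of $P_{\mathcal M}-\sum_j M_{b_j}P_{\mathcal M}M_{b_j}^*$, and $L/K\succeq 0$ says exactly that $\sum_j M_{b_j}M_{b_j}^*\le I$. Shift-invariance gives $M_{b_j}P_{\mathcal M}=P_{\mathcal M}M_{b_j}P_{\mathcal M}$, hence
\[\sum_j M_{b_j}P_{\mathcal M}M_{b_j}^*=P_{\mathcal M}\Big(\sum_j M_{b_j}P_{\mathcal M}M_{b_j}^*\Big)P_{\mathcal M}\le P_{\mathcal M}\Big(\sum_j M_{b_j}M_{b_j}^*\Big)P_{\mathcal M}\le P_{\mathcal M}.\]

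The sign condition is what allows writing $1-1/K$ as $b(z)b(w)^*$. That in turn is what makes the middle inequality (replacing $P_{\mathcal M}$ by $I$ inside a sum of positive terms) legitimate. For $\rho>1$ the coefficient $\rho(1-\rho)/2$ of $\langle z,w\rangle^2$ is negative, and this step, and with it your compression idea, breaks down.
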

\begin{proof}
Suppose $L$ admits a characteristic function through $K$. Note that \[\HM_N: = \overline{\text{span}}\{z^\alpha: \alpha\in \Z^d_+, |\alpha| > N\},\]
is a shift-invariant subspace of $\mathcal{H}_{L}$, for any $N\ge 1.$ Thus, by Corollary \ref{L_M/K>>0},
\begin{align*}
\frac{L_{\HM_N}(z, w)}{K(z, w)} &= \frac{L(z, w)-\sum_{|\alpha| \leq N} \ell_{\alpha} z^\alpha \overline{w}^\alpha}{K(z, w)} \\
& = (1-\langle z,w\rangle)^{\rho} \sum_{n=N+1}^\infty \ell_n\langle z,w\rangle^n  \\
&\succeq 0.
\end{align*}
Thus, for any $N\ge 1,$ the power series
\[(1-x)^\rho \sum_{n=N+1}^\infty \ell_nx^n = \ell_{N+1} x^{N+1} + (\ell_{N+2} - \rho \ell_{N+1})x^{N+2} + \ldots\]
has nonnegative coefficients.
Since $L_{n+1}/L_n \rightarrow 1$, we conclude from \cite[Corollary 6.3]{HartzIsomorphism} that $0 < \rho \leq 1$. That $K$ is a factor of $L$ follows from Corollary \ref{L_M/K>>0}.

For the converse, observe that $0 < \rho \leq 1$ implies $K$ is complete Pick. Thus, $K$ is a complete Pick factor of $L.$ That $L$ admits a characteristic function through $K$  now follows from Corollary \ref{L_M/K>>0} and \cite[Lemma 2.1]{Beurlingfactor}.

\end{proof}

\printbibliography
\end{document}